\title{Magic rectangles with empty cells}
\author {
Abdollah Khodkar and David Leach\\
Department of Mathematics\\
University of West Georgia\\
Carrollton, GA 30118\\
{\tt akhodkar@westga.edu},
{\tt cleach@westga.edu}
}
\date{}
\newtheorem{prelem}{{\bf Theorem}}
 \newtheorem{theorem}{Theorem}
\newtheorem{corollary}[theorem]{Corollary}
\newtheorem{lemma}[theorem]{Lemma}
\newtheorem{remark}[theorem]{Remark}
\theoremstyle{definition}
\theoremstyle{remark}
\begin{document}

\maketitle

\begin{abstract}
A magic rectangle of order $m\times n$ with precisely $r$ filled cells in each row and precisely $s$ filled cells in each column, denoted $MR(m,n;r,s)$, is an arrangement of the numbers from 0 to $mr-1$ in an $m\times n$ rectangle such that each number occurs exactly once in the rectangle and the sum of the entries of each row is the same and the sum of the entries of each column is also the same. In this paper we study the existence of $MR(m,n;r,2)$, $MR(m,km;ks,s)$, and $MR(am,bm;bs,as)$. We also prove that there exists a nonconsecutive magic square set $NMSS(m, s; t)$ if and only if $m=s=t=1$ or $3\leq s\leq m$ and either $s$ is even or $mt$ is odd.
\end{abstract}

\section{Introduction}\label{SEC1}
A {\em magic rectangle} of order $m\times n$, $MR(m,n)$, is an arrangement of the numbers from 0 to $mn-1$ in an $m\times n$ rectangle such that each number occurs exactly once in the rectangle and the sum of the entries of each row is the same and the sum of entries of each column is also the same.
The following theorem is well known (see \cite {TH1, TH2} or \cite {sun}):

\begin{theorem}\label{TH:sun}
An $m \times n$ magic rectangle exists if and only if $m \equiv n \pmod 2$, $m + n > 5$, and $m, n > 1$.
\end{theorem}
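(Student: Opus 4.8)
First I would settle necessity. In any $MR(m,n)$ the entries sum to $\binom{mn}{2}$, so the common row sum equals $\tfrac{n(mn-1)}{2}$ and the common column sum $\tfrac{m(mn-1)}{2}$; both must be integers, forcing $n(mn-1)$ and $m(mn-1)$ even. If $mn$ is even then $mn-1$ is odd and so $m$ and $n$ are both even; if $mn$ is odd then $m$ and $n$ are both odd. In either case $m\equiv n\pmod 2$. A $1\times n$ array with $n>1$ would have its column sums equal to its distinct entries, which is impossible, so $m,n>1$; and under these constraints the only remaining parameter with $m+n\le 5$ is $(m,n)=(2,2)$, which is excluded by inspection -- the forced splitting of $\{0,1,2,3\}$ into two rows of equal sum never produces equal column sums. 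Hence $m+n>5$.

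For sufficiency the plan is to reduce to two auxiliary arrays. By transposition I may assume $m\le n$. Write each $s\in\{0,\dots,mn-1\}$ uniquely as $s=\ell n+r$ with $0\le\ell<m$ and $0\le r<n$. It suffices to construct $m\times n$ arrays $L=(\ell_{ij})$ with entries in $\{0,\dots,m-1\}$ and $R=(r_{ij})$ with entries in $\{0,\dots,n-1\}$ such that every column of $L$ is a permutation of $\{0,\dots,m-1\}$, every row of $R$ is a permutation of $\{0,\dots,n-1\}$, every row of $L$ sums to $\tfrac{n(m-1)}{2}$, every column of $R$ sums to $\tfrac{m(n-1)}{2}$, and the $mn$ pairs $(\ell_{ij},r_{ij})$ are pairwise distinct; then $a_{ij}=\ell_{ij}n+r_{ij}$ uses each of $0,\dots,mn-1$ once and has constant row and column sums, and the halved quantities above are integers exactly because $m\equiv n\pmod 2$. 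So the task is to build two such arrays and make them orthogonal.

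Next I would carry out the constructions. If $m$ and $n$ are both even, I would build $L$ by choosing the odd-indexed columns to be arbitrary permutations of $\{0,\dots,m-1\}$ and each even-indexed column to be the complement ($x\mapsto m-1-x$) of the column to its left; then each column is a permutation and each row sum is $\tfrac n2(m-1)$. Dually I would build $R$ using complementary pairs of rows. The only thing left to arrange in this case is orthogonality: the free permutations in $L$ and in $R$ must be chosen so that $(i,j)\mapsto(\ell_{ij},r_{ij})$ is a bijection onto $\{0,\dots,m-1\}\times\{0,\dots,n-1\}$. If $m$ and $n$ are both odd, column/row pairing is unavailable, so I would instead use linear templates $\ell_{ij}\equiv i+j$ and $r_{ij}\equiv \alpha i+\beta j$ (mod $m$ and $n$ respectively, with $\alpha,\beta$ units so the relevant lines are permutations): the wrap-around contributions cancel when $m\mid n$, and when $\gcd(m,n)=1$ the Chinese Remainder Theorem makes the pair map a bijection automatically. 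The residual odd--odd parameters I would obtain from a composition lemma -- if $MR(m_1,n_1)$ and $MR(m_2,n_2)$ exist, then so does $MR(m_1m_2,n_1n_2)$ (entry $A_{i_1j_1}m_2n_2+B_{i_2j_2}$ in row $(i_1,i_2)$, column $(j_1,j_2)$) -- applied to factor out $\gcd(m,n)$, together with a magic square of odd order as a base case.

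The main obstacle, I expect, is making the two auxiliary arrays orthogonal in the even--even case -- equivalently, building $MR(2a,2b)$ directly. Because $MR(2,2)$ does not exist one cannot tile with a uniform $2\times 2$ block, and the ``low half versus high half'' choices in $L$ and $R$ must be balanced so that all row sums and all column sums come out equal at the same time while the overall entries remain distinct. Engineering that simultaneous condition (and hand-checking the finitely many small base cases, such as $4\times 4$, $4\times 6$ and $6\times 6$, from which larger even--even cases can also be composed) is where I expect essentially all of the work to lie.
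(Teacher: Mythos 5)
The paper does not prove this statement at all: it is quoted as a classical result (Theorem~A) with references to Harmuth and Sun, so there is no in-paper argument to compare against and your proposal has to stand on its own. Its necessity half does: the integrality of $n(mn-1)/2$ and $m(mn-1)/2$, the exclusion of $m=1$ or $n=1$, and the hand check of $(2,2)$ are all correct and are the standard argument. The product lemma you state (entry $A_{i_1j_1}m_2n_2+B_{i_2j_2}$) is also correct.

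The sufficiency half, however, has genuine gaps in exactly the two places where the theorem is hard. First, in the even--even case you reduce everything to choosing the free permutations in $L$ and $R$ so that $(i,j)\mapsto(\ell_{ij},r_{ij})$ is a bijection, and you then say this balancing act ``is where I expect essentially all of the work to lie'' --- that is an acknowledgement that the case is not proved, not a proof; no construction of, say, $MR(4,4)$, $MR(4,6)$ or a general $MR(2a,2b)$ is actually given. Second, the odd--odd case with $\gcd(m,n)=1$ is not handled by your sketch: the linear template $\ell_{ij}\equiv i+j \pmod m$ has column permutations but \emph{not} constant row sums when $m\nmid n$ (for $m=3$, $n=5$ the row multisets are $\{0,1,2,0,1\}$ and $\{1,2,0,1,2\}$, with sums $4$ and $6$), and CRT bijectivity does nothing to repair the sums. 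Nor can the composition lemma produce these parameters: factoring $MR(3,5)$ or $MR(5,7)$ through smaller rectangles would require $1\times k$ magic rectangles, which do not exist for $k>1$. So coprime odd--odd rectangles need a genuinely different direct construction (this is the content of the Harmuth/Sun/Hagedorn proofs), and with both that case and the even--even case left open, the proposal is a plausible plan but not a proof of the theorem.
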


A {\em magic rectangle set} $MRS(a, b; c)$ is a collection of $c$ rectangles of order $a \times b$ with numbers
$0,1, 2,\ldots, abc-1$, each appearing once, with all row sums in every rectangle equal to a constant $M$ and
all column sums in every rectangle equal to a constant $N$.

\begin{theorem}\label{magicrectangleset}
\cite{DF1, DF2, TRH} Let $a, b, c$ be positive integers such that $1 < a \leq b$. Then a magic
rectangle set $MRS(a, b; c)$ exists if and only if either $a, b, c$ are all odd or $a$ and $b$ are
both even
with $(a, b)\neq (2, 2)$.
\end{theorem}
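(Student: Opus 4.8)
The plan is to prove the two implications separately. For necessity, I would count: since the entries of the set are exactly $0,1,\dots,abc-1$, their total is $\tfrac12 abc(abc-1)$; there are $ac$ rows in all, each of sum $M$, and $bc$ columns, each of sum $N$, so $M=\tfrac12 b(abc-1)$ and $N=\tfrac12 a(abc-1)$. If $b$ is odd, $M\in\mathbb{Z}$ forces $abc-1$ to be even, i.e.\ $a,b,c$ all odd; symmetrically, if $a$ is odd then $N\in\mathbb{Z}$ forces the same. Hence $a,b$ have the same parity, and if that parity is odd then $c$ is odd as well. Finally, in a $2\times 2$ block $\left(\begin{smallmatrix}w&x\\y&z\end{smallmatrix}\right)$ the equations $w+x=y+z$ and $w+y=x+z$ give $x=y$, impossible for distinct entries, so no $MRS(2,2;c)$ exists; this is the only further exclusion because (as below) all remaining parameter triples admit constructions.

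For the sufficiency when $a,b$ are both even with $(a,b)\neq(2,2)$, note that $MR(a,b)$ itself exists by Theorem~\ref{TH:sun}. I would use a complementation construction: partition $\{0,\dots,abc-1\}$ into the $\tfrac{abc}{2}$ pairs $\{t,\,abc-1-t\}$, each summing to $abc-1$, distribute these pairs among the $c$ rectangles, and within each rectangle place both elements of every assigned pair in a common column, $a/2$ pairs per column, so that each column sum is automatically $\tfrac a2(abc-1)=N$. It then remains only to orient each pair (which of its two cells gets the smaller element) so that every row sum equals $M=\tfrac b2(abc-1)$; since $b$ is even this is an elementary subset-sum balancing, feasible precisely outside the degenerate case $(a,b)=(2,2)$. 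This is essentially the ``set'' version of the classical even magic rectangle construction.

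For the sufficiency when $a,b,c$ are all odd (so $a,b\ge 3$ and $MR(a,b)$ again exists by Theorem~\ref{TH:sun}), I would use a product construction. Fix a magic rectangle $E=(e_{ij})$ of order $a\times b$ with constant row sum $\mu$ and column sum $\nu$, and write each value of $\{0,\dots,abc-1\}$ uniquely as $e+ab\,q$ with $0\le e<ab$, $0\le q<c$. In the $k$-th rectangle put $e_{ij}+ab\,C[i][j][k]$ in cell $(i,j)$, where $C$ is an $a\times b\times c$ array over $\{0,\dots,c-1\}$ chosen so that (a) for each cell $(i,j)$ the values $C[i][j][0],\dots,C[i][j][c-1]$ form a permutation of $\{0,\dots,c-1\}$, and (b) for each $k$ the slice $C[\cdot][\cdot][k]$ has every row sum equal to the fixed value $\tfrac{b(c-1)}{2}$ and every column sum equal to $\tfrac{a(c-1)}{2}$ (integers, since $c$ is odd). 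Then (a) makes the union of all entries equal to $\{0,\dots,abc-1\}$, and (b) makes every row sum $\mu+ab\cdot\tfrac{b(c-1)}{2}=\tfrac12 b(abc-1)$ and every column sum $\nu+ab\cdot\tfrac{a(c-1)}{2}=\tfrac12 a(abc-1)$, so the result is an $MRS(a,b;c)$. The whole odd case thus reduces to constructing the array $C$.

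I expect constructing $C$ to be the main obstacle. The obvious attempts fail: taking each slice to be an entrywise cyclic shift $v\mapsto v+k \bmod c$ of one balanced base array keeps property (a) but destroys the ``same constant for every $k$'' part of (b) unless $c$ divides both $a$ and $b$, and similar rigidity defeats the naive entrywise-permutation approaches whenever $c\nmid b$. So the real work is a genuine combinatorial construction of $C$ valid for all odd $a\le b$ and all odd $c$ — presumably built from complementation $v\mapsto c-1-v$ together with carefully chosen shifts (perhaps after reducing $c$ to prime powers), arranged so that the shift-orbits and the complementary pairs jointly cover each cell exactly once while each slice keeps constant line sums. Once $C$ is available the theorem follows, the rest being routine bookkeeping built on Theorem~\ref{TH:sun}.
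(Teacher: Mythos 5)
You should note first that the paper does not prove this statement at all: Theorem~\ref{magicrectangleset} is quoted from the literature \cite{DF1,DF2,TRH}, so the relevant comparison is with those published proofs, not with anything in this paper. Your necessity argument is correct and standard: the integrality of $M=\tfrac12 b(abc-1)$ and $N=\tfrac12 a(abc-1)$ forces $a\equiv b\pmod 2$ and, in the odd case, $c$ odd; and the $2\times 2$ computation ($x=y$) rules out $(a,b)=(2,2)$.

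The genuine gap is that both sufficiency halves are missing their decisive constructions. In the even case, placing complementary pairs $\{t,abc-1-t\}$ in common columns does make every column sum equal to $N$ automatically, but the remaining step — distributing the $abc/2$ pairs among the $c$ rectangles and orienting each pair so that all $ac$ rows have sum $M$ — is exactly where the real work lies. Calling it ``an elementary subset-sum balancing, feasible precisely outside the degenerate case $(a,b)=(2,2)$'' is an assertion, not an argument: each pair contributes deviations $\pm\delta$ from the mean $(abc-1)/2$, and you must exhibit an assignment in which every row's deviations cancel, for all admissible $a,b,c$; nothing you write shows this is always possible, nor why $(2,2)$ is the only obstruction to it. In the odd case the situation is worse, and you acknowledge it: the entire theorem reduces, in your setup, to constructing the three-dimensional array $C$ (each cell a permutation of $\{0,\dots,c-1\}$ along the third axis, each slice with constant row sums $\tfrac{b(c-1)}{2}$ and column sums $\tfrac{a(c-1)}{2}$), and you do not construct it. That object is essentially the whole content of the odd-order case (it is what \cite{DF2} is devoted to); the surrounding reduction via a fixed $a\times b$ magic rectangle from Theorem~\ref{TH:sun} is routine bookkeeping. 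As it stands, then, your proposal proves necessity only; sufficiency in both parity cases is an outline that defers the key combinatorial constructions rather than supplying them.
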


We now define magic rectangles that contain some empty cells. A magic rectangle of order $m\times n$ with precisely $r$ filled cells in each row and precisely $s$ filled cells in each column, denoted $MR(m,n;r,s)$, is an arrangement of the numbers from 0 to $mr-1$ in an $m\times n$ rectangle such that each number occurs exactly once in the rectangle and the sum of the entries of each row is the same and the sum of entries of each column is also the same. By definition, $mr=ns$, $r\leq n$ and $s \leq m$. If $r=n$ or $s=m$, then the rectangle has no empty cell.

If $m=n$ in an $MR(m,n;r,s)$, then $r=s$ and we denote this square by $MS(m;s)$ and call it a {\em magic square with empty cells}.
A magic square of order $n$ is called $s$-{\em diagonal} if its entries all belong to $s$ consecutive diagonals.

In \cite {KL} the authors settle the existence of magic squares with empty cells.

\begin{theorem}\label{TH:KL} \cite {KL}
There is an $s$-diagonal magic square $MS(m;s)$ if and only if $m=s=1$ or $3\leq s\leq m$ and either $s$ is even or $m$ is odd.
\end{theorem}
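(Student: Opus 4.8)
The plan is to establish the equivalence in two steps: necessity of the stated conditions, then sufficiency via an explicit recursive construction.

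\textbf{Necessity.} I would record four facts. Since each column of an $MS(m;s)$ carries $s$ distinct filled cells among $m$ rows, $s\le m$. Counting all entries, the common row sum (which here equals the common column sum, both being $\binom{ms}{2}/m$) is $\frac{s(ms-1)}{2}$, so $s(ms-1)$ is even; if $s$ is odd this forces $ms-1$ even, i.e. $m$ odd, which is the parity clause. If $s=1$ then each row holds a single entry and all of them would have to be equal, so $m=1$. Finally $s=2$ is impossible for $m\ge2$: writing $a_i$, $b_i$ for the entries of row $i$ on the two occupied diagonals, equal row and column sums give, for the $i$-th row and $i$-th column, $a_i+b_i=a_i+b_{i-1}$ (indices mod $m$), hence $b_i=b_{i-1}$ for every $i$, contradicting distinctness. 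These are exactly the asserted restrictions, so necessity is routine.

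\textbf{A concatenation lemma.} The heart of the sufficiency proof will be that magic squares with empty cells glue along diagonal bands: if $MS(m;s_1)$ and $MS(m;s_2)$ exist and $s_1+s_2\le m$, then so does $MS(m;s_1+s_2)$. I would keep the first square on diagonals $0,\dots,s_1-1$ with its entries $0,\dots,ms_1-1$, and place the second on diagonals $s_1,\dots,s_1+s_2-1$ — matching its diagonal $k$ to diagonal $s_1+k$ with the same row index and adding $ms_1$ to every entry, so it now uses $ms_1,\dots,m(s_1+s_2)-1$. The occupied cells fill $s_1+s_2$ consecutive diagonals and carry $0,\dots,m(s_1+s_2)-1$ once each. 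Each grid row meets a full row of the first square and a shifted full row of the second, so its sum is the constant $R_1+R_2+ms_1s_2$; since translating all rows of the second square by a common vector carries columns to columns, each grid column has the same sum, and one checks $R_1+R_2+ms_1s_2=\frac{(s_1+s_2)(m(s_1+s_2)-1)}{2}$, so the result is a genuine $MS(m;s_1+s_2)$.

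\textbf{Reduction to base cases.} Because $\{3,4\}$ additively generates every integer $\ge3$ except $5$, while $\{4,6\}$ generates every even integer $\ge4$, and because $MS(m;m)$ is simply an $m\times m$ magic rectangle, which exists for $m\ge3$ by Theorem~\ref{TH:sun}, the concatenation lemma reduces the entire existence question to a short list of ``base'' squares: $MS(m;3)$, $MS(m;4)$, $MS(m;5)$ for odd $m$, and $MS(m;4)$, $MS(m;6)$ for even $m$ (together with the few small orders in which $s$ is forced to equal $m$). Repeatedly applying the lemma — and invoking Theorem~\ref{TH:sun} when $s=m$ — then builds $MS(m;s)$ for every pair $(m,s)$ meeting the necessary conditions.

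\textbf{The hard part.} The genuine work, and the step I expect to be the main obstacle, is the explicit construction of these base squares for large $m$. A natural approach is to ``roll up'' a magic rectangle: an $s\times m$ magic rectangle already has all column sums equal to the target $R$, and laying its $k$-th row along the $k$-th occupied diagonal makes the grid's row sums automatic — but the grid's column sums then become the broken-diagonal sums of the rectangle, so one needs an $s\times m$ magic rectangle that is additionally magic on one family of broken diagonals. Producing such ``one-way pandiagonal'' rectangles via modular formulas — using that multiplication by a unit of $\mathbb{Z}_m$ is a bijection when $m$ is odd, and complementary pairing of diagonals when $s$ is even — while controlling the wrap-around corrections in those diagonal sums (precisely the phenomenon that obstructs $s=2$) is delicate; I would expect to split into a few congruence classes of $m$ and to check a bounded number of small cases by hand. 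Once the base squares are secured, everything else is the bookkeeping already described.
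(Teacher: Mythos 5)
Your necessity argument, your concatenation lemma, and your reduction to the base families $MS(m;3)$, $MS(m;4)$, $MS(m;5)$ (for odd $m$) and $MS(m;6)$ are all sound, and this skeleton is in fact the structure of the actual proof: note that the present paper does not prove Theorem \ref{TH:KL} at all but quotes it from \cite{KL}, and Remark \ref{remark} records that \cite{KL} proceeds exactly by direct constructions for $s=3,4,5,6$ (Theorems 3--6 there) plus inductive constructions in its Section 3, which play the role of your gluing lemma. So as a blueprint your proposal is faithful to the known proof.

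The genuine gap is that you stop precisely where the content of that proof lies. The four infinite families of base squares are never constructed; you yourself label them ``the main obstacle'' and offer only a strategy, namely rolling an $s\times m$ magic rectangle onto $s$ consecutive broken diagonals. But Theorem \ref{TH:sun} gives no control over the broken-(anti)diagonal sums of that rectangle, and requiring them to be constant as well is a genuinely stronger, unproved property --- your own $s=2$ impossibility argument is exactly the wrap-around phenomenon that shows naive rolling fails, and for $s=3,5$ the restriction to odd $m$ must emerge from the construction rather than be imposed. The deferred ``split into congruence classes of $m$ and check small cases'' is where all the work of \cite{KL} sits (and where the consecutive-numbers-on-each-diagonal refinements used later in this paper, Remark \ref{remark}, come from). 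As written, the proposal proves the necessity direction and reduces sufficiency to four unproved families of squares; it does not prove the theorem.
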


A {\em nonconsecutive magic square} with $s$ filled cells in each row and in each column, denoted $NMS(m;s)$, has the same properties as an $MS(m;s)$ except that the $ms$ numbers in the square are not necessarily consecutive numbers.
A {\em nonconsecutive magic square set}, $NMSS(m, s; t)$, is a collection of $t$ nonconsecutive magic squares $NMS(m;s)$
with entries $0,1, 2,\ldots, mst-1$, each appearing once, with all row sums in every square and all column sums in every square equal to a constant $C$.

In the coming sections we investigate the existence of $MR(m,n;r,2)$, $MR(m,km;ks,s)$, and $MR(am,bm;bs,as)$.
We also prove that there exists a nonconsecutive magic square set $NMSS(m, s; t)$ if and only if $m=s=t=1$ or $3\leq s\leq m$ and either $s$ is even or $mt$ is odd.
For a given array $A$, by $(i,j;k)$ $\in A$ we mean that the cell $(i,j)$ of $A$ contains $k$.

\section{Magic rectangles with two cells filled in each column}\label{SEC2}
In this section we investigate the existence of an $MR(m,n;r, 2)$ for all $m,n,r$, where $2\leq m\leq n$ and
$3\leq r\leq n$. Note that if $m=2$ or $r=n$, then the rectangle cannot have empty cells. In addition, if $m=n$, then $r=2$ and there does not exist an $MR(m,m;2, 2)$.

If an $MR(m,n;r, 2)$ exists, we must have $mr=2n$. If $r$ is odd, say $r=2k+1$, then $n=m(2k+1)/2$, which implies that $m$ must be even. If an $MR(m,m(2k+1)/2; 2k+1,2)$ exists, then its row sums are
$$\dfrac{m(2k+1)(m(2k+1)-1)}{2m}=\dfrac{(2k+1)(m(2k+1)-1)}{2},$$
which is not an integer because $m$ is even. Hence, there does not exist an $MR(m,m(2k+1)/2; 2k+1,2)$.
If $r$ is even, say $r=2k$, then $n=km$. We now prove the main result of this section.

\begin{theorem}\label{Th.m.km.2k.2}
{\rm There exists an $MR(m,km;2k,2)$ for every $k\geq 2$ and $m\geq 2$.
}
\end{theorem}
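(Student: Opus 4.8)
The plan is to reduce the construction to a small explicit combinatorial problem and then dispatch it by cases on the parities of $k$ and $m$. First the parameters are forced: from $mr=2n$ we get $n=km$, so an $MR(m,km;2k,2)$ places the $2km$ numbers $0,1,\dots,2km-1$ in $2km$ cells, and summing all entries shows that every column sum equals $2km-1$ and every row sum equals $k(2km-1)$. In particular the two entries of each column form a \emph{complementary pair} $\{j,\,2km-1-j\}$, and the $km$ columns must realize the $km$ such pairs, one per column.

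Next I fix a convenient pattern of filled cells. Index the columns by pairs $(c,i)$ with $1\le c\le k$ and $i\in\{0,1,\dots,m-1\}$ (indices read modulo $m$), and in column $(c,i)$ fill exactly the cells in rows $i$ and $i+1$; every row then has $2k$ filled cells and every column has $2$. Let $x_{c,i}$ be the entry in row $i$ of column $(c,i)$; the column-sum requirement forces the entry in row $i+1$ of that column to be $2km-1-x_{c,i}$, and all entries are distinct and fill $\{0,\dots,2km-1\}$ precisely when the numbers $x_{c,i}$ form a transversal of the $km$ complementary pairs. A one-line computation gives the sum of row $i$ as $k(2km-1)+\sum_{c}x_{c,i}-\sum_{c}x_{c,i-1}$, so all row sums coincide (hence equal the required value) if and only if $\sum_{c=1}^{k}x_{c,i}$ is independent of $i$. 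Thus it suffices to produce a $k\times m$ array $X=(x_{c,i})$ whose entries form a transversal of the complementary pairs of $\{0,\dots,2km-1\}$ and all of whose column sums are equal.

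When $k$ is even this is easy: take the ``lower'' transversal $\{0,1,\dots,km-1\}$, match $j$ with $km-1-j$ to get $km/2$ pairs of common sum $km-1$, and place $k/2$ of these pairs in each column of $X$; every column sum is then $(k/2)(km-1)$. (Alternatively, for even $k\ge 4$ one can invoke Theorem~\ref{magicrectangleset}: an $MRS(2,k;m)$ exists, and laying its $m$ blocks of size $2\times k$ around an $m$-cycle of rows---block $t$ occupying rows $t,t+1$---yields the rectangle, since the common block row sum $M$ makes every row of the assembled rectangle sum to $2M=k(2km-1)$.) When $k$ and $m$ are both odd, use the transversal $\{0,\dots,km-1\}$ again; its total $km(km-1)/2$ is divisible by $m$ because $km-1$ is even, and a standard balanced partition of $\{0,\dots,km-1\}$ into $m$ blocks of size $k$ with common sum $k(km-1)/2$ supplies the columns of $X$ (no small exceptions occur since here $k\ge 3$).

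The remaining case, $k$ odd and $m$ even, is the crux. Here $km(km-1)/2\equiv m/2\pmod m$, so the naive transversal $\{0,\dots,km-1\}$ cannot be split into equal column sums; one must first swap some representatives $j$ for their complements $2km-1-j$ (each swap alters the total by the odd number $2km-1-2j$) to reach a transversal whose total is divisible by $m$, and then realize that transversal as a $k\times m$ array with equal column sums. Achieving the divisibility and the equal-sum partition simultaneously, uniformly in $k$ and $m$, is where the real work lies; I expect it to need an explicit scheme---most likely with a further split according to $m\bmod 4$ (so that $m/2$ is odd or even), a short auxiliary lemma on partitioning a ``lightly twisted'' initial segment into equal-sum blocks, and the very smallest values of $m$ checked by hand.
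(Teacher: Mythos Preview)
Your reduction is sound and is in fact the same framework the paper uses: the filled cells sit on two adjacent diagonals of each of the $k$ sub-squares, the two entries in any column are complementary, and the whole problem reduces to choosing the ``upper'' entries $x_{c,i}$ so that $\sum_{c}x_{c,i}$ is independent of $i$. Your treatment of $k$ even is fine, and for $k,m$ both odd the appeal to a balanced partition of $\{0,\dots,km-1\}$ into $m$ blocks of size $k$ can be justified (for instance via a $k\times m$ Kotzig array, which exists in that parity regime).

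The genuine gap is the case $k$ odd, $m$ even, which you explicitly leave unfinished. You correctly diagnose the obstruction---the total of the bottom-half transversal is $\equiv m/2\pmod m$---but the cure you outline (swaps, a split on $m\bmod 4$, an auxiliary lemma, small cases by hand) is much heavier than what is needed, and as written the proof is incomplete. The paper dispatches all odd $k$ with a single uniform formula, with no dependence on the parity of $m$: for $0\le \ell\le (k-1)/2$ take $x_{\ell,i}=\ell m+i$; for $(k+1)/2\le \ell\le k-2$ take $x_{\ell,i}=(\ell+1)m-i-1$; and for the final block take $x_{k-1,i}=(k+1)m-2i-1$. The first $k-1$ blocks contribute exactly $\{0,\dots,(k-1)m-1\}$, while the last contributes the $m$ odd integers in $[(k-1)m,(k+1)m-1]$ (their complements being the even integers in that range), so the whole is a transversal of the complementary pairs. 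The $i$-coefficient in $\sum_\ell x_{\ell,i}$ is $\tfrac{k+1}{2}-\tfrac{k-3}{2}-2=0$, so the column sums are constant. The idea you were missing is that one ``stride-$2$'' block, drawn from the \emph{middle} complementary pairs rather than from the bottom half, absorbs the parity obstruction in one move.
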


\begin{proof}
Partition an empty rectangle $m\times km$ into $k$ empty squares $m\times m$,
say $S_{\ell}$, where $0\leq \ell\leq k-1$.
Note that the numbers in an $MR(m,km;2k,2)$ are $\{0, 1, 2, \ldots, km(2km-1)\}$. If $k$ is even,
We fill out two consecutive diagonals of each $S_{\ell}$ for $0\leq \ell\leq k-1$ with numbers of
$$A_{\ell}=\{\ell m, \ell m+1,\ldots, \ell m+m-1\} $$ and
$$B_{\ell}=\{(2k-\ell-1)m,(2k-\ell-1)m+1,\ldots, (2k-\ell)m-1\},$$
respectively. So each diagonal consists of $m$ consecutive numbers. If $k$ is odd, we fill out two consecutive diagonals of each $S_{\ell}$ for $0\leq \ell\leq k-2$ with numbers of
$A_{\ell}$ and $B_{\ell}$, respectively. The last two diagonals are filled with numbers in
$A_{k-1}\cup B_{k-1}$ and the numbers in each of these two diagonals are not consecutive.

It is easy to see that the set $\{A_{\ell},B_{\ell}\mid 0\leq \ell\leq k-1\}$ is a partition of the set $\{0, 1, 2, \ldots, km(2km-1)\}$. Hence, every necessary number occurs exactly once in the resulting
$m\times km$ rectangle.

We consider two cases.
\vspace{3mm}

\noindent {\bf Case 1: $k$ even}

\noindent For $\ell$ even and $0\leq \ell \leq k-2$

 $ \begin{array}{lll}
\mbox {Diagonal 1:}\quad (i,i;\ell m+i)& \mbox {for}& 0\leq i\leq m-1,\\
\mbox {Diagonal 2:}\quad (i+1,i;(2k-\ell)m-i-1 & \mbox {for}& 0\leq i\leq m-1.\\
\end{array}$

\noindent For $\ell$ odd and $1\leq \ell \leq k-1$

$ \begin{array}{lll}
\mbox {Diagonal 1:}\quad (i,i;(\ell+1) m-i-1)& \mbox {for}& 0\leq i\leq m-1,\\
\mbox {Diagonal 2:}\quad (i+1,i;(2k-\ell-1)m+i & \mbox {for}& 0\leq i\leq m-1.\\
\end{array}$\\
See Figure \ref {5.10.4.2}.
 Note that the addition in the first component of each triple $(i,j;k)$ is modulo $m$.

\begin{figure}[ht]
$$\begin{array}{|c|c|c|c|c|c|c|c|c|c|}\hline
0&&&&15&9&&&&14 \\ \hline
19&1&&&&10&8&&&\\ \hline			
&18&2&&&&11&7&& \\ \hline		
&&17&3&&&&12&6&  \\ \hline	
&&&16&4&&&&13&5  \\ \hline
\end{array}$$
 \caption{An $MR(5,10;4,2)$}
 \label{5.10.4.2}
\end{figure}

First we calculate the row sums. The row sum for row zero is:

$$\begin{array}{cl}
&\sum_{\ell=0, \ell \mbox{ even}}^{k-2} [\ell m+(2k-\ell)m-m]\\
+&\sum_{\ell=1, \ell \mbox{ odd}}^{k-1}[(\ell+1)m-1+(2k-\ell-1)m+m-1]\\
=&k^2m+k^2m-k=k(2km-1).
\end{array}$$

The row sum for $1\leq i\leq m-1$ is:
$$\begin{array}{cl}
&\sum_{\ell=0, \ell \mbox{ even}}^{k-2} [(\ell m+i)+ (2k-\ell)m-i]\\
+&\sum_{\ell=1, \ell \mbox{ odd}}^{k-1}[[(\ell+1)m+i-1+(2k-\ell-1)m+i-1]\\
=&k^2m+k^2m-k=k(2km-1).
\end{array}$$

Since there are two cells filled in each column, it is easy to see that the column sum for each column of $S_{\ell}$ is $2km-1$.
\vspace{3mm}

\noindent {\bf Case 2: $k$ odd}

\noindent For $0\leq \ell \leq (k-1)/2$

$ \begin{array}{lll}
\mbox {Diagonal 1:}\quad (i,i;\ell m+i)& \mbox {for}& 0\leq i\leq m-1,\\
\mbox {Diagonal 2:}\quad (i+1,i;(2k-\ell)m-i-1 & \mbox {for}& 0\leq i\leq m-1.\\
\end{array}$

\noindent For $(k+1)/2\leq \ell \leq k-2$

$ \begin{array}{lll}
\mbox {Diagonal 1:}\quad (i,i;(\ell+1) m-i-1)& \mbox {for}& 0\leq i\leq m-1,\\
\mbox {Diagonal 2:}\quad (i+1,i;(2k-\ell-1)m+i & \mbox {for}& 0\leq i\leq m-1.\\
\end{array}$

\noindent For $\ell = k-1$

$ \begin{array}{lll}
\mbox {Diagonal 1:}\quad (i,i;(k+1) m-2i-1)& \mbox {for}& 0\leq i\leq m-1,\\
\mbox {Diagonal 2:}\quad (i+1,i;(k-1)m+2i & \mbox {for}& 0\leq i\leq m-1.\\
\end{array}$\\
See Figure \ref{4.12.6.2}.
 Note that the addition in the first component of each triple $(i,j;k)$ is modulo $m$.

First we calculate the row sums. The row sum for row zero is:

$$\begin{array}{cl}
&\sum_{\ell=0}^{(k-1)/2} [\ell m+(2k-\ell)m-m]\\
+&\sum_{\ell=(k+1)/2}^{(k-2)} [(\ell+1)m-1 + (2k-\ell-1)m+m-1]\\
+&2km+2m-3\\
=& (2km-m)(k+1)/2 + (2km+m-2)(k-3)/2+2km+2m-3\\
=& k(2km-1).
\end{array}$$

The row sum for $1\leq i\leq m-1$ is:

$$\begin{array}{cl}
&\sum_{\ell=0}^{(k-1)/2} [(\ell m+i)+(2k-\ell)m-i]\\
+&\sum_{\ell=(k+1)/2}^{(k-2)} [(\ell+1)m-i-1 + (2k-\ell-1)m+i-1]\\
+&(k+1)m-2i-1)+(k-1)m+2(i-1)\\
=& (2km)(k+1)/2 + (2km-2)(k-3)/2+2km-3\\
=& k(2km-1).
\end{array}$$

Since there are two cells filled in each column, it is easy to see that the column sum for each column of
$S_{\ell}$ is $2km-1$.

\end{proof}

\begin{figure}[ht]
$$\begin{array}{|c|c|c|c|c|c|c|c|c|c|c|c|}\hline
 0&&&20	&4&&&16&15&&&14\\ \hline
23&1&&&19&5&&&8&13&&\\ \hline	
&22&2&&&18&6&&&10&11&	\\ \hline
&&21&3&&&17&7&&&12&9\\ \hline
\end{array}$$
 \caption{An $MR(4,12;6,2)$}
 \label{4.12.6.2}
\end{figure}


\section{The existence of an $MR(m,km;ks,s)$}\label{SEC3}
In this section we prove that there exists an $MR(m,km;ks, s)$, where
$k,m,s$ are positive integers, $3\leq s\leq m$, and either $m$ is even or $ks$ is odd.
An $m\times n$ array whose rows are permutations is called a {\em row-Latin} array. In the following
lemma we investigate the existence of row-Latin arrays whose column sum is constant. Such arrays
are called {\em Kotzig} arrays (see \cite{W}).

\begin{lemma}\label{rectangleofpermutations}
Let $k$ and $s$ be positive integers. If $s$ is even or $k$ is odd, then
there is an $s\times k$ rectangle whose rows are permutations on $0,1,2,\ldots, k-1$ and whose column sum is
$(k-1)s/2$ for each column.
\end{lemma}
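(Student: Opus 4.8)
The plan is to construct the desired $s \times k$ Kotzig array explicitly by splitting into two cases according to the parity hypotheses, since the two cases call for genuinely different constructions.

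First I would handle the case where $k$ is odd. Here the natural move is to use a single magic-rectangle-style latin construction: take the row indexed by $a$ (for $0 \le a \le s-1$) to be the permutation $j \mapsto (a + c\cdot j) \bmod k$ for a suitable multiplier, or more simply the cyclic shift $j \mapsto (a+j)\bmod k$ repeated with period... but that does not give constant column sums unless $s$ is a multiple of $k$. The cleaner approach when $k$ is odd is to observe that the single permutation $j \mapsto (2j) \bmod k$ (a bijection since $\gcd(2,k)=1$) together with reversed permutations can be paired, but the genuinely simplest fact is: when $k$ is odd, a single $1\times k$ row consisting of $0,1,\ldots,k-1$ already has column "sum" vector $(0,1,\ldots,k-1)$, and stacking cyclic shifts $j \mapsto (a+j)\bmod k$ for $a=0,\ldots,k-1$ produces a $k\times k$ array with every column sum equal to $(k-1)k/2$, hence each column sum is $(k-1)/2$ "on average"; since $k$ is odd, $(k-1)/2$ is an integer, and any $s$ consecutive shifts... no — I need exactly $s$ rows. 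The correct statement: when $k$ is odd, for each $a$ pair the shift-row $\sigma_a$ with its reverse so a block of two rows has constant column sum $k-1$; but that needs $s$ even. To cover $s$ odd with $k$ odd simultaneously, I would instead use the known Kotzig-array existence: a $1\times k$ "magic row" with the reversal trick. Concretely, for $k$ odd I will exhibit the $s\times k$ array whose $a$-th row is $j\mapsto \pi_a(j)$ where the $\pi_a$ are chosen so that $\sum_a \pi_a(j)$ is independent of $j$; the standard choice is $\pi_a(j) = (a + t_a\, j)\bmod k$ with half the $t_a$ equal to $+1$ and half equal to $-1$ when $s$ is even, and when $s$ is odd (forcing $k$ odd by hypothesis) add one extra row $j \mapsto (2^{-1}\cdot \text{const} - \sum_{a} \text{contribution})$... this is getting delicate, so in the write-up I would lean on the constant-column-sum Kotzig array literature cited as \cite{W} for the odd-$k$, odd-$s$ sub-case and give the explicit shift-plus-reversal construction for $s$ even.

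For the case $s$ even (with $k$ arbitrary), the construction is clean: pair up the $s$ rows into $s/2$ blocks of two rows each, where each block consists of the identity permutation $j \mapsto j$ in the first row and the reversal $j \mapsto k-1-j$ in the second row. Then each column $j$ receives $j + (k-1-j) = k-1$ from each block, so the total column sum is $(s/2)(k-1) = (k-1)s/2$, exactly as required, and every row is trivially a permutation of $0,1,\ldots,k-1$. This settles the $s$-even case completely with no parity restriction on $k$.

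The main obstacle is the remaining sub-case $s$ odd and $k$ odd, because the pairing trick above cannot absorb an odd number of rows. Here I would build the array from a single row that is "self-balancing": when $k$ is odd, the permutation $j \mapsto (2j) \bmod k$ is a bijection of $\{0,\ldots,k-1\}$, and one checks that for any $1\times k$ row the column-sum target $(k-1)s/2$ with $s=1$ is $(k-1)/2$, which is not the entry pattern of a single permutation — so a single row never works. Instead, take $s-1$ rows in $(s-1)/2$ reversal pairs as above (contributing $((s-1)/2)(k-1)$ to each column) and one final row that must contribute the constant $(k-1)/2$... again impossible for a permutation. So the right decomposition for $s$ odd, $k$ odd is into $(s-3)/2$ reversal pairs plus one distinguished block of three rows forming a $3\times k$ Kotzig array with each column sum $3(k-1)/2$; such a $3\times k$ array with $k$ odd exists — e.g. rows $j$, $(j + (k+1)/2 + \lfloor 2j/k\rfloor \cdot(\ldots))$... — so the crux reduces to exhibiting one explicit $3\times k$ row-Latin array with constant column sum, valid for all odd $k$ (for instance rows given by $j\mapsto j$, $j \mapsto (j + (k-1)/2)\bmod k$ composed appropriately, chosen so the three shifts-with-signs sum to a constant). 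I expect verifying this small $3\times k$ building block — getting the three permutations so their columnwise sum is genuinely constant for every odd $k$ — to be the one real calculation in the proof; everything else is the reversal-pairing bookkeeping.
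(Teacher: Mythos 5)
Your $s$-even case is complete and is exactly the paper's construction: stack $s/2$ copies of the $2\times k$ block whose rows are $j\mapsto j$ and $j\mapsto k-1-j$, giving column sums $(s/2)(k-1)$. Your reduction of the remaining case ($s$ odd, $k$ odd) to $(s-3)/2$ such reversal pairs plus a single $3\times k$ Kotzig array is also the paper's decomposition. But at that point you stop: you assert that a $3\times k$ row-Latin array with constant column sum $3(k-1)/2$ exists for every odd $k$, gesture at ``$j\mapsto j$, $j\mapsto (j+(k-1)/2)\bmod k$ composed appropriately,'' and otherwise defer to the cited literature. That $3\times k$ block is precisely the only nontrivial content of the lemma in this case, so leaving it as ``the one real calculation'' (or outsourcing it to \cite{W}, which would amount to assuming the result you are proving) is a genuine gap, not a detail.

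The gap is closable, and the paper closes it with exactly the kind of linear rows you were circling: row $0$ is $j\mapsto j$, row $1$ is $j\mapsto (j+(k-1)/2)\bmod k$, and row $2$ is $j\mapsto (-2j-1)\bmod k$, i.e.\ $k-1-2j$ for $0\le j\le (k-1)/2$ and $2k-1-2j$ for $(k+1)/2\le j\le k-1$ (a bijection because $\gcd(2,k)=1$). The point you cannot skip is that constancy of the column sums is an identity over the integers, not merely modulo $k$: choosing multipliers whose sum is $0\bmod k$ is not enough, because each reduction mod $k$ subtracts a column-dependent multiple of $k$, and these wraparound corrections must cancel column by column. With the rows above they do: for $0\le j\le (k-1)/2$ the column sum is $j+\bigl((k-1)/2+j\bigr)+(k-1-2j)=3(k-1)/2$, and for $(k+1)/2\le j\le k-1$ it is $j+\bigl(j-(k+1)/2\bigr)+(2k-1-2j)=3(k-1)/2$. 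Supplying this block and this two-line check is what your proposal is missing; with it, your pairing argument gives the lemma in full.
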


\begin{proof}
First let $s=2$. Then the $2\times k$ rectangle $M$ with entries
$(0,j;j)$ and $(1,j; k-1-j)$, where $0\leq j\leq k-1$, is the required rectangle. If $s=2\ell$, one can use $\ell$ copies of $M$ to obtain an $s\times k$ rectangle with the required properties.

Now let $s$ and $k$ be both odd. We first find a
$3\times k$ rectangle with the required properties. Start with a $3\times k$ empty rectangle, say $N$, and fill its cells as follows.

\noindent \mbox {Row 0:}\quad $\begin{array}{lll}(0,j;j)& \mbox {for}& 0\leq j\leq k-1;\end{array}$

\noindent \mbox {Row 1:}\quad $\left\{\begin{array}{lll}(1,j;(k-1)/2+j)& \mbox {for}& 0\leq j\leq (k-1)/2,\\
                                       (1,j;j-(k+1)/2)& \mbox {for}& (k+1)/2\leq j\leq k-1;\\
                    \end{array}\right.$   \\
\noindent \mbox {Row 2:}\quad $\left\{\begin{array}{lll}(2,j;k-1-2j)& \mbox {for}& 0\leq j\leq (k-1)/2,\\
                                       (2,j;2k-1-2j)& \mbox {for}& (k+1)/2\leq j\leq k-1.\\
                    \end{array}\right.$

\noindent (See Figure \ref{P_Arrays}.)

\noindent It is easy to see that each row is a permutation of $0,1,2,\ldots, k-1$. The column sum for column $j$, where $0\leq j\leq (k-1)/2$, is
$$j+(k-1)/2+j+(k-1-2j)=3(k-1)/2$$
and the column sum for column $j$, where
$(k+1)/2\leq j\leq k-1$, is
$$j+(j-(k+1)/2)+(2k-1-2j)=3(k-1)/2,$$ as desired.

\noindent Now let $k\geq 5$ be odd. We use $(s-3)/2$ copies of $M$ and one copy of $N$ to obtain the required rectangle of order $s\times k$.  This completes the proof.
\end{proof}

\begin{figure}[ht]
$$\begin{array}{|c|c|c|c|c|c|c|c|c|}\hline
0&1&2&3&4&5&6&7&8 \\ \hline
4&5&6&7&8&0&1&2&3  \\ \hline
8&6&4&2&0&7&5&3&1  \\ \hline
\end{array}$$
\caption{An example of a $3\times 9$ Kotzig array}
 \label{P_Arrays}
\end{figure}

\begin{theorem}\label{Th.m.km.ks.s}
Let $k,m,s$ be positive integers. Then there exists a magic rectangle $MR(m,km;ks,s)$ if and only if
$m=s=k=1$ or $2\leq s\leq m$ and either $s$ is even or $km$ is odd.
\end{theorem}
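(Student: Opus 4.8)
The plan is to establish both directions of the equivalence, with the necessity conditions being essentially already worked out in the excerpt and the sufficiency requiring an explicit construction that combines the Kotzig array of Lemma~\ref{rectangleofpermutations} with the magic square with empty cells guaranteed by Theorem~\ref{TH:KL}.

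For the necessity direction, first I would dispose of the trivial case: if $s=1$ then each row of the $m\times km$ rectangle contains a single entry, so all row sums are equal forces all $km$ entries (which are $0,1,\ldots,mks-1=0,1,\ldots,mk-1$) to be equal, impossible unless $mk=1$, i.e. $m=k=1$ (and then $s=1$). If $s=2$, the entries are $0,\ldots,2mk-1$ filling a rectangle with $2k$ cells per row; when $m=n$ we would need $km=m$, so $k=1$ and we are asking for an $MR(m,m;2,2)$, which the excerpt already notes does not exist, so we may assume $s\geq 3$ or handle $s=2$ under the constraint $k\geq 2$ via Theorem~\ref{Th.m.km.2k.2} (with $ks=2$ impossible for $k\geq 2$, so actually $s=2$ only survives when $k=1$, excluded). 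The substantive necessity claim is that we need $s\leq m$ (forced by the definition, since each column has $s\leq m$ filled cells) and that when $s$ is odd, $km$ must be odd: the row sum of an $MR(m,km;ks,s)$ equals $\frac{1}{m}\cdot\binom{mks}{2}/1 = \frac{mks(mks-1)}{2m}=\frac{ks(mks-1)}{2}$, and for this to be an integer when $s$ is odd we need $2\mid ks(mks-1)$; if additionally $k$ is even this is fine, so the real constraint is: if $s$ is odd and $k$ is odd then $mks-1$ must be even, i.e. $mks$ odd, i.e. $m$ odd. Packaging: if $s$ is odd then $km$ is odd. I would also double check the column sum $\frac{mks(mks-1)}{2km}=\frac{s(mks-1)}{2}$ gives the parallel condition and no new obstruction.

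For sufficiency, assume $2\leq s\leq m$ with $s$ even or $km$ odd (the case $m=s=k=1$ being vacuous/trivial). The construction: by Theorem~\ref{TH:KL} there is an $s$-diagonal magic square $MS(m;s)$ — note the hypothesis ``$s$ even or $m$ odd'' needed there is implied by ``$s$ even or $km$ odd'' — call it $Q$, with entries $0,1,\ldots,ms-1$, common row/column sum $\sigma=\frac{ms-1}{2}\cdot s$ wait, $\sigma=\frac{ms(ms-1)}{2m}=\frac{s(ms-1)}{2}$. By Lemma~\ref{rectangleofpermutations}, since $s$ is even or $k$ is odd (again implied by our hypothesis: if $k$ is even then $km$ even so $s$ must be even), there is an $s\times k$ Kotzig array $P$ with rows permutations of $\{0,1,\ldots,k-1\}$ and each column summing to $(k-1)s/2$. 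Now build the $m\times km$ array as follows: index the columns by pairs, or rather, partition the $km$ columns into $m$ groups; for cell $(i,j)$ of the big rectangle with $j = mq + c$... Actually the cleanest description: place in block position corresponding to row $i$, write the $s$ filled entries of row $i$ of $Q$ as $q_{i,1},\ldots,q_{i,s}$ in their $s$ columns, and replace each such entry $q_{i,t}$ by the $k$ values $q_{i,t} + m s \cdot P_{t,0},\, q_{i,t}+ms\cdot P_{t,1},\ldots$ distributed across $k$ new columns. Concretely the big entry is $q_{i,j} + ms\cdot p$ where $p$ ranges over row $t$ of $P$. Then: each row of the big rectangle has $ks$ filled cells; each column has $s$ filled cells (inherited from $Q$ having $s$ per column); the entries are exactly $\{0,1,\ldots,mks-1\}$ because $\{q + ms\cdot p : q\in\{0,\ldots,ms-1\}, p\in\{0,\ldots,k-1\}\}$ is a complete residue system. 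The row sum becomes $\sum_t \sum_{\text{cols}} (q_{i,t}+ms\cdot p) = \sigma + ms\cdot(s\cdot\frac{k(k-1)}{2})$... I need to be careful: row $i$ sum $= \sum_{t=1}^s\big(k q_{i,t} + ms\sum_{\text{row }t\text{ of }P}\big) = k\sigma + ms\sum_{t=1}^s \frac{k(k-1)}{2} = k\sigma + ms^2\frac{k(k-1)}{2}$, constant. The column sum: column $c$ in block $q$ picks out, for the $s$ rows $i$ that are filled there, $q_{i,\cdot} + ms\cdot P_{t,q}$ summed, and since $\sum_t P_{t,q} = (k-1)s/2$ is constant and $\sum q_{i,\cdot}=\sigma$ over the filled rows (the column sum of $Q$), this is $\sigma + ms\cdot(k-1)s/2$, constant. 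So we get an $MR(m,km;ks,s)$.

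The main obstacle I anticipate is getting the indexing of the construction exactly right — specifically, ensuring that ``blowing up'' each cell of the magic square $Q$ into $k$ cells (one per column-block) keeps the filled-cell pattern consistent so that every column of the big rectangle gets exactly $s$ filled cells and every row exactly $ks$, and verifying that the hypothesis ``$s$ even or $km$ odd'' correctly feeds the two ingredient theorems (Theorem~\ref{TH:KL} wants ``$s$ even or $m$ odd''; Lemma~\ref{rectangleofpermutations} wants ``$s$ even or $k$ odd''). These follow because $km$ odd $\Rightarrow$ both $m$ and $k$ odd, while $km$ even with our hypothesis forces $s$ even, which satisfies both. A secondary check is the small/degenerate cases, $s=2$ and $k=1$: when $k=1$ we just want $MR(m,m;2s... )$ — wait, $k=1$ gives $MR(m,m;s,s)=MS(m;s)$, which exists under exactly the stated conditions by Theorem~\ref{TH:KL}, consistent with ``$s$ even or $m=km$ odd''; and $s=2$ with $k\geq 2$ is exactly Theorem~\ref{Th.m.km.2k.2} and also falls out of the general construction. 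I would present the general construction once, remark that it specializes correctly, and let the two cited results carry the parity bookkeeping.
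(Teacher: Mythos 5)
Your proposal is, in substance, the paper's own proof: necessity from integrality of the row and column sums, the case $s=2$ delegated to Theorem~\ref{Th.m.km.2k.2}, and for $3\leq s\leq m$ a blow-up of an $s$-diagonal $MS(m;s)$ (Theorem~\ref{TH:KL}) into $k$ column blocks, where the copy of diagonal $t$ sitting in block $b$ is shifted by $ms\cdot P_{t,b}$ for a Kotzig array $P$ from Lemma~\ref{rectangleofpermutations}; the row-Latin property of $P$ gives each value of $\{0,\ldots,kms-1\}$ once and constant row sums, the constant column sums of $P$ give constant column sums. The paper phrases this as ``place diagonal $i$ of $S_\ell$ into diagonal $i$ of $T_j$ whenever $(i,j;\ell)\in P$,'' which is exactly your one-step description.

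The one point you left unresolved (and flagged as the ``main obstacle'') is precisely where the $s$-diagonal structure is doing the work, and your literal indexing would fail there: you define $t$ by writing the filled entries of row $i$ as $q_{i,1},\ldots,q_{i,s}$, i.e.\ $t$ is the position within the row. With that labeling, a fixed column of $Q$ may meet the same within-row position several times (e.g.\ in the $5\times 5$ square of the paper's Figure~\ref{S_Arrays}, column $0$ meets position $1$ of rows $1$, $2$, and $3$), so the shifts added in a column of block $b$ sum to $ms\sum P_{t,b}$ over a multiset of $t$'s that is not $\{0,\ldots,s-1\}$, and the column sums then depend on $b$. The fix is to label each filled cell by its (broken) diagonal: the $s$ diagonals of the $s$-diagonal $MS(m;s)$ each meet every row and every column exactly once, so summing $P_{t,b}$ over a column runs $t$ through all of $\{0,\ldots,s-1\}$ and yields the constant $(k-1)s/2$, exactly as in your computation. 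Two smaller corrections: the general construction cannot absorb $s=2$, since no $MS(m;2)$ exists, so that case genuinely needs Theorem~\ref{Th.m.km.2k.2} (with $k\geq 2$; note $k=1$, $s=2$ is the nonexistent $MR(m,m;2,2)$); and in the necessity argument it is the column sum $s(kms-1)/2$, not the row sum $ks(kms-1)/2$, that forces $km$ odd when $s$ is odd (also, for $s=1$ it is each \emph{column}, not each row, that has a single entry).
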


\begin{proof}
If there exists an $MR(m,km;ks,s)$, then the row sum $(kms-1)kms/2m$ and column sum $(kms-1)kms/2km$
must be integers. This implies that $2\leq s\leq m$ and $s$ is even or $km$ is odd. (The case $m=s=k=1$ is trivial.)

Now we prove that these conditions are sufficient.
If $s=2$ we apply Theorem \ref{Th.m.km.2k.2}. Now let $3\leq s\leq m$.
By Theorem \ref{TH:KL}, there is an $s$-diagonal $m\times m$ magic square, say $A$.
Label the $s$ diagonals of $A$ by $0,1,2,\ldots, s-1.$
Partition an empty rectangle $m\times km$, say $B$, into $k$ empty $m\times m$ squares,
say $S_{\ell}$, where $0\leq \ell\leq k-1$. We fill $s$ diagonals of each $S_{\ell}$ as follows.
If $(a,b; c)\in A$, then $(a,b; c+\ell ms))\in S_{\ell}$, where $0\leq \ell\leq k-1$.
See Figure \ref{S_Arrays}.
Since the row sums of $A$ is $s(ms-1)/2$, it follows that the row sum for each row of $B$ is
$$\dfrac{k(ms-1)s}{2}+\dfrac{(k-1)kms^2}{2}=\dfrac{ks(kms-1)}{2},$$
which is also the row sum for each row of an $MR(m,km;ks,s)$ if it exists.
The column sum for each column of $S_{\ell}$ is $((ms-1)s)/2 + \ell ms^2$, where $0\leq \ell \leq k-1$.
We now permute the diagonals of $B$ such that the column sum for each column of $B$
becomes $s(kms-1)/2$. Note that permuting the diagonals of $B$ does not change the row sum for each row of $B$.
If $s$ is even or $k$ is odd, by Lemma
\ref{rectangleofpermutations}, there is an $s\times k$ rectangle, say $P$, whose rows are permutations on $0,1,2,\ldots, k-1$ and whose column sum is $s(k-1)/2$ for each column.
Partition an empty rectangle $m\times km$, say $C$, into $k$ empty squares $m\times m$, say $T_{\ell}$, where $0\leq \ell\leq k-1$.
For each $(i,j;\ell)\in P$, we place the entries of diagonal $i$ of $S_\ell$ in diagonal $i$ of $T_j$.
See Figure \ref{5.25.15.3}.
Since the rows of $P$ are permutations, we use diagonal $i$ of $S_{\ell}$ exactly once for $0\leq i\leq s-1$ and $0\leq \ell\leq k-1$. In addition, since the column sum of $P$ is $s(k-1)/2$ for each column,
it follows that the column sum for each column of $C$ is
$$\left(\dfrac{s(k-1)}{2}\right)ms+\dfrac{(ms-1)s}{2}=\dfrac{(kms-1)s}{2},$$
as desired.
\end{proof}

\begin{figure}[ht]
$$\begin{array}{ccc}
\begin{array}{|c|c|c|c|c|} \hline
&&2&10&9     \\ \hline
6&&&4&11     \\ \hline	
12&8&&&1    	\\ \hline
3&13&5&&    \\ \hline				
&0&14&7&   \\ \hline				
\end{array} &
\begin{array}{|c|c|c|c|c|} \hline
&&17&25&24      \\ \hline
21&&&19&26    \\ \hline	
27&23&&&16  	\\ \hline
18&28&20&&    \\ \hline				
 &15&29&22&    \\ \hline				
\end{array} &
\begin{array}{|c|c|c|c|c|} \hline
&&32&40&39 \\ \hline
36&&&34&41 \\ \hline	
42&38&&&31\\ \hline
33&43&35&& \\ \hline				
&30&44&37& \\ \hline				
\end{array} \\
S_0&S_1&S_2\\
\begin{array}{|c|c|c|c|c|} \hline
&&47&55&54     \\ \hline
51&&&49&56      \\ \hline
57&53&&&46     \\ \hline		
48&58&50&&       \\ \hline	
&45&59&52&     \\ \hline
\end{array}&
\begin{array}{|c|c|c|c|c|} \hline
&&62&70&69  \\ \hline
66&&&64&71  \\ \hline
72&68&&&61 \\ \hline		
63&73&65&&  \\ \hline	
&60&74&67&  \\ \hline
\end{array} &
\begin{array}{|c|c|c|c|c|}\hline
0&1&2&3&4  \\ \hline
2&3&4&0&1  \\ \hline
4&2&0&3&1   \\ \hline
\end{array}\\
S_3&S_4&P\\
\end{array}$$
\caption{Arrays $S_0, S_1, S_2, S_3, S_4$ and $P$}
 \label{S_Arrays}
\end{figure}

\begin{figure}[ht]
$$\begin{array}{ccc}
\begin{array}{|c|c|c|c|c|} \hline
&&62&40&9 \\ \hline			
6&&&64&41 \\ \hline	
42&8&&&61 \\ \hline	
63&43&5&&  \\ \hline
&60&44&7&  \\ \hline	
\end{array}&
\begin{array}{|c|c|c|c|c|} \hline
&&32&55&24	 \\ \hline		
21&&&34&56	\\ \hline
57&23&&&31	\\ \hline
33&58&20&&  \\ \hline			
&30&59&22&  \\ \hline					
\end{array}&
\begin{array}{|c|c|c|c|c|} \hline
&&2&70&39	\\ \hline		
36&&&4&71  \\ \hline
72&38&&&1 \\ \hline	
3&73&35&48& \\ \hline	   	
&0&74&37&  \\ \hline		    		
\end{array}\\
T_0&T_1&T_2 \\
\begin{array}{|c|c|c|c|c|} \hline
&&47&10&54  \\ \hline		
51&&&49&11	\\ \hline
12&53&&&46	\\ \hline
48&13&50&&  \\ \hline		
&45&14&52&  \\ \hline
\end{array}&
\begin{array}{|c|c|c|c|c|} \hline
&&17&25&69  \\ \hline
66&&&19&26  \\ \hline
27&68&&&16   \\ \hline
18&28&65&&  \\ \hline		
&15&29&67&  \\ \hline
\end{array}& \\
T_3&T_4&\\
\end{array}$$
\caption{An $MR(5,25;15,3)$}
 \label{5.25.15.3}
\end{figure}

Recall that a nonconsecutive magic square set $NMSS(m, s; t)$ is a collection of $t$ nonconsecutive magic squares $NMS(m;s)$ with entries $0,1, 2,\ldots, $ $mst-1$, each appearing once, with all row sums in every square and all column sums in every square equal to a constant.

\begin{corollary}\label{Cor.m.km.ks.s}
Let $m,s,t$ be positive integers.  Then there exists a nonconsecutive magic square set $NMSS(m, s; t)$ if and only if
$m=s=t=1$ or $3\leq s\leq m$ and either $s$ is even or $mt$ is odd.
In addition, the entries in each square are on $s$ consecutive diagonals.
\end{corollary}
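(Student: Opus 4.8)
The plan is to derive this corollary from Theorem~\ref{Th.m.km.ks.s} applied with $k=t$, after a short direct argument for necessity. For necessity, suppose an $NMSS(m,s;t)$ exists. Its $t$ squares jointly contain $0,1,\dots,mst-1$, with total $\tfrac12 mst(mst-1)$, and this is split evenly among the $mt$ rows of the collection (and, separately, among its $mt$ columns), so the common constant must be $C=\tfrac{s(mst-1)}{2}$; hence $s(mst-1)$ is even, i.e.\ $s$ is even or $mst$ is odd. Since $s\le m$ by definition, it remains only to handle small $s$: if $m=1$ then $s=1$, and equal row sums over distinct numbers force $t=1$, the trivial solution; if $s=1$ and $m\ge2$, each row of a square is a single entry equal to $C$, so the $m$ entries of that square would all coincide, which is impossible.

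The substantive part of necessity is excluding $s=2$, and here the key observation is a pairing argument. In any square of a hypothetical $NMSS(m,2;t)$ the $m$ entries sum to $mC$, so both its row sum and its column sum equal $C$; thus the two entries in every row sum to $C$ and the two entries in every column sum to $C$. Since all numbers in a square are distinct, for a value $v$ in the square the only possible partner (in its row, and likewise in its column) is $C-v$. Hence, for the cell carrying $v$, the other cell in its row and the other cell in its column are two cells carrying $C-v$ that lie in different rows, so are distinct --- a contradiction. Therefore $s=2$ cannot occur, and necessity is complete.

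For sufficiency, assume $3\le s\le m$ with $s$ even or $mt$ odd (the case $m=s=t=1$ being trivial). Then $2\le s\le m$ and $s$ is even or $tm$ is odd, so Theorem~\ref{Th.m.km.ks.s} with $k=t$ yields an $MR(m,tm;ts,s)$, and I would invoke the explicit construction in its proof: one takes the $s$-diagonal magic square $A=MS(m;s)$ of Theorem~\ref{TH:KL}, forms $S_\ell$ by adding $\ell ms$ to every entry of $A$ for $0\le\ell\le t-1$, takes the $s\times t$ Kotzig array $P$ of Lemma~\ref{rectangleofpermutations}, and builds the rectangle from $t$ consecutive $m\times m$ blocks $T_0,\dots,T_{t-1}$ by placing, for each diagonal label $d$ with $0\le d\le s-1$, diagonal $d$ of $S_\ell$ into diagonal $d$ of $T_j$ whenever $(d,j;\ell)\in P$. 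The claim is that $\{T_0,\dots,T_{t-1}\}$ is the desired $NMSS(m,s;t)$, and I would verify it in three steps. First, each $T_j$ has its filled cells on exactly the $s$ consecutive diagonals occupied by $A$, hence $s$ filled cells per row and per column; this is also the ``in addition'' clause (for $t=1$ it is just the $s$-diagonal square of Theorem~\ref{TH:KL}). Second, writing $a_{i,d}$ for the entry of $A$ in grid-row $i$ on diagonal $d$, the entry of $T_j$ in grid-row $i$ on diagonal $d$ equals $a_{i,d}+ms\ell$ with $(d,j;\ell)\in P$, so grid-row $i$ of $T_j$ sums to the row sum of $A$ plus $ms$ times the $j$th column sum of $P$, namely $\tfrac{s(ms-1)}{2}+ms\cdot\tfrac{s(t-1)}{2}=\tfrac{s(tms-1)}{2}$, independent of $i$ and $j$. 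Third, the same computation using the columns of $A$ (also of constant sum $\tfrac{s(ms-1)}{2}$, each meeting each diagonal once) shows every column of every $T_j$ sums to $\tfrac{s(tms-1)}{2}$. Since the blocks $T_j$ partition $\{0,1,\dots,mst-1\}$, they form an $NMSS(m,s;t)$ with common constant $C=\tfrac{s(tms-1)}{2}$.

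I expect the main obstacle to be the exclusion of $s=2$, since it needs a genuinely new pairing idea not supplied by the magic-rectangle machinery; the second verification above is the other delicate point, as one must check that the sum is constant \emph{within each block} $T_j$ rather than merely over an entire row of the magic rectangle, which comes down exactly to the column sums of $P$ being constant, i.e.\ to Lemma~\ref{rectangleofpermutations} producing a genuine Kotzig array rather than just an array of permutations.
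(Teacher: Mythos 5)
Your proof is correct and follows essentially the same route as the paper: sufficiency comes from reusing the blocks $T_0,\dots,T_{t-1}$ of the construction in the proof of Theorem~\ref{Th.m.km.ks.s} with $k=t$, with the constancy of the block row and column sums traced back to the Kotzig array of Lemma~\ref{rectangleofpermutations}. The only differences are cosmetic: the paper checks the block sums by an invariance argument (adding a constant to a diagonal, or swapping corresponding diagonals between blocks, preserves equal row and column sums) and dismisses necessity by appealing to Theorem~\ref{TH:KL}, whereas you compute the sums directly and spell out the parity condition and the exclusion of $s=1,2$ (your pairing argument) explicitly.
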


\begin{proof}
Let the squares $S_{\ell}$ and $T_{\ell}$, $0\leq \ell\leq k-1$, be as in the proof of Theorem \ref{Th.m.km.ks.s}.
First note that if we add a constant to a diagonal of $S_0$, then the row sums and column sums within
the square $S_0$ will be equal to a constant $M$. This implies that the row sums and the column sums of every $S_{\ell}$, $0\leq \ell\leq k-1$, are equal (see Figure \ref{S_Arrays}). In addition, if we switch a diagonal of $S_i$ with its corresponding diagonal in $S_j$, the row sums and column sums of $S_i$ and of $S_j$ will be equal. This implies that the row sums and the columns sum of every $T_{\ell}$ given in the proof of Theorem \ref{Th.m.km.ks.s} are equal for $0\leq \ell\leq k-1$. By the proof of Theorem \ref{Th.m.km.ks.s}, the column sum of any column of $T_i$ is equal to the column sum of any column of $T_j$. Hence $T_{\ell}$, $0\leq \ell\leq k-1$, is a nonconsecutive magic square set $NMSS(m, s; t)$. By construction, the entries in each square are on $s$ consecutive diagonals.
By Theorem \ref{TH:KL}, one can see that the given conditions are necessary.
\end{proof}

\section{The existence of an $MR(am,bm;bs,as)$}\label{SEC4}
We first prove that if $m,n$ are positive integers with $\gcd(m,n)=1$, then there does not exist an $MR(m,n; r, s)$ with empty cells.

\begin{lemma}\label{mandnarecoprime}
Let $m,n,r,s$ be positive integers. If $\gcd(m,n)=1$, then there does not exist an $MR(m,n; r, s)$ with empty cells.
\end{lemma}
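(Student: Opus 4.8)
The plan is to argue by a counting/divisibility obstruction. Suppose for contradiction that an $MR(m,n;r,s)$ with empty cells exists, where $\gcd(m,n)=1$. From the defining identity $mr=ns$ and $\gcd(m,n)=1$ we get $n\mid r$, and since $r\le n$ this forces $r=n$; symmetrically $m\mid s$ and $s\le m$ gives $s=m$. But $r=n$ (equivalently $s=m$) means the rectangle has no empty cell, contradicting the hypothesis. So the whole lemma reduces to the elementary observation that $mr=ns$ together with coprimality and the size bounds $r\le n$, $s\le m$ already rules out empty cells.

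The one point requiring a line of care is the deduction ``$n\mid mr$ and $\gcd(m,n)=1$ imply $n\mid r$'': this is just the standard fact that if $n\mid ab$ and $\gcd(n,a)=1$ then $n\mid b$, applied with $a=m$, $b=r$. I would state this explicitly, then combine it with $0<r\le n$ to conclude $r=n$. At that stage the definition of $MR(m,n;r,s)$ literally says that when $r=n$ (or $s=m$) ``the rectangle has no empty cell,'' so the contradiction is immediate and I would simply cite that sentence from Section~\ref{SEC1}.

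Since there is essentially no obstacle here, the proof is a few lines: assume an $MR(m,n;r,s)$ with at least one empty cell exists and $\gcd(m,n)=1$; use $mr=ns$ to get $r=n$; observe this contradicts the presence of an empty cell. I would present it exactly in that order. The only thing to be mildly careful about is not to invoke the row-sum or column-sum conditions at all — they are not needed, the contradiction is purely about the filling pattern — so the proof should resist the temptation to compute sums and instead rest solely on the arithmetic of $mr=ns$.
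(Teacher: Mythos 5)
Your proposal is correct and follows essentially the same route as the paper: from $mr=ns$ and $\gcd(m,n)=1$ deduce $n\mid r$, combine with $r\le n$ to get $r=n$ (hence $s=m$), and conclude the rectangle has no empty cells. The paper's proof is just as short, so there is nothing to add.
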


\begin{proof}
If an $MR(m,n; r, s)$ exists, then $mr=ns$. Since $\gcd(m,n)=1$, it follows that $n\mid r$. On the other hand, $r\leq n$. So $r=n$ and $m=s$. Hence the magic rectangle cannot have empty cells.
\end{proof}

Let $a$ and $b$ be positive integers with $\gcd(a,b)=1$. If an $MR(am,bm;$  $r, s)$ exists, then $amr=bms$ and by
the fact that $\gcd(a,b)=1$ we must have $r=br'$ and $s=as'$. Hence, by the equality $ambr'=bmas'$ we have $r'=s'$.
Now we investigate the existence of an
$MR(am,bm;bs,as)$.

\begin{theorem}\label{rectanglesmultiplication}
If there exist an $MS(m;s)$ and an $a\times b$ magic rectangle,
then there exists a magic rectangle $MR(am,bm;bs,as)$.
\end{theorem}

\begin{proof}
Let $A$ be an $MS(m;s)$ and let $B$ be an $a\times b$ magic rectangle. Partition an $am\times bm$ empty rectangle $C$
into $m^2$ rectangles of size $a\times b$, say $S_{(i,j)}$, where $0\leq i,j\leq m-1$. If $(i,j;k)\in A$ and
$(p,q,\ell)\in B$, then we fill the cell $(p,q)$ of $S_{i,j}$ with $kab+\ell$.
By construction, the entries in rectangle $C$ are $0,1,2,\ldots, abms-1$.
We now calculate row sums and column sums of $C$. The row sum for each row of $C$ is:
$$\frac{(ab-1)bs}{2}+\frac{(ms-1)s(ab)b}{2}=\frac{(abms-1)bs}{2},$$
as desired.

Similarly, the column sum for each column of $C$ is:
$$\frac{(ab-1)as}{2}+\frac{(ms-1)s(ab)a}{2}=\frac{(abms-1)as}{2},$$
as desired.
\end{proof}

\begin{corollary}\label{Cor:rectanglesmultiplication}
Let $3\leq s\leq m$ and either $s$ is even or $m$ is odd. Let $a \equiv b \pmod 2$, $a + b > 5$ and $a, b > 1$.
Then there exists an $MR(am,bm;bs,as)$.
\end{corollary}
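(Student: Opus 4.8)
The plan is to read this off from the multiplication construction of Theorem~\ref{rectanglesmultiplication} by feeding it the two ingredients it requires. Theorem~\ref{rectanglesmultiplication} says: given an $MS(m;s)$ and an $a\times b$ magic rectangle, one obtains an $MR(am,bm;bs,as)$. So the entire task is to check that the hypotheses stated in the corollary are exactly what is needed to guarantee that those two ingredients exist, and then to invoke the theorem.

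First I would produce the magic square with empty cells. The hypothesis $3\le s\le m$ together with ``$s$ is even or $m$ is odd'' is precisely the sufficient condition of Theorem~\ref{TH:KL} (the degenerate case $m=s=1$ being irrelevant here since $s\ge 3$), so there is an $s$-diagonal magic square $MS(m;s)$, say $A$. Next I would produce the ordinary magic rectangle: the hypotheses $a\equiv b\pmod 2$, $a+b>5$, and $a,b>1$ are exactly the conditions of Theorem~\ref{TH:sun}, so there is an $a\times b$ magic rectangle, say $B$. Applying Theorem~\ref{rectanglesmultiplication} to $A$ and $B$ then yields an $MR(am,bm;bs,as)$, which is the desired object.

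There is essentially no genuine obstacle: this is a bookkeeping corollary whose only content is lining the hypotheses up with Theorems~\ref{TH:sun}, \ref{TH:KL}, and \ref{rectanglesmultiplication}. The one point deserving a sentence of care is to confirm that the resulting array really is a magic rectangle with empty cells in the sense of Section~\ref{SEC1}: its parameters are $(am,bm;bs,as)$ with $bs\le bm$ and $as\le am$ (both following from $s\le m$), and the entries are $0,1,\ldots,abms-1$ as recorded in the proof of Theorem~\ref{rectanglesmultiplication}, with the common row and column sums given there. Nothing beyond this verification is required.
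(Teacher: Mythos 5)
Your proposal is correct and matches the paper's proof exactly: both obtain the $MS(m;s)$ from Theorem~\ref{TH:KL}, the $a\times b$ magic rectangle from Theorem~\ref{TH:sun}, and then apply Theorem~\ref{rectanglesmultiplication}. Nothing further is needed.
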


\begin{proof}
By Theorem \ref{TH:KL}, there is an $s$-diagonal $MS(m;s)$ and by Theorem \ref{TH:sun}, there is an
$a\times b$ magic rectangle. Now the result follows by Theorem \ref{rectanglesmultiplication}.
\end{proof}


\begin{remark}\label{remark}
{\em
A closer look at the direct constructions given in Theorems 3, 4, 5, 6 and the inductive constructions given in Section 3 of \cite{KL} yields the following facts, which we employ in the proof of the next theorem.

\begin{enumerate}
\item There exists a $3$-diagonal magic square of order $n\geq 3$ with $n$ odd such that each diagonal consists of
$n$ consecutive numbers.

\item There exists a $4$-diagonal magic square of order $n\geq 4$ such that each diagonal consists of
$n$ consecutive numbers.

\item There exists a $5$-diagonal magic square of order $n\geq 5$ with $n$ is odd such that each diagonal consists
 of $n$ consecutive numbers.

\item There exists a $6$-diagonal magic square of order $n\geq 6$ such that there are four diagonals with numbers in $\{0,1,2,\ldots, 4n-1\}$ and each of these diagonals consists of $n$ consecutive numbers.

\item Let $3\leq s\leq m$ and either $s$ is even or $m$ is odd. Then there is an $MS(m;s)$
with $s-2$ diagonals which partition the set $\{0,1,\ldots, (m(s-2)-1\}$  and each of these diagonals consists of $m$ consecutive numbers.

\end{enumerate}
}
\end{remark}


In the following theorem we investigate the existence of a magic rectangle $MR(am,bm;bs,as)$, where $a+b=5$.
Note that this case is not covered by Corollary \ref{Cor:rectanglesmultiplication}.

\begin{theorem}\label{a+b=5}
Let $a,b$ be positive integers with $a+b=5$. Then there exists an $MR(am,bm;bs,as)$ if and only if $s$ is even and $2\leq s\leq m$.
\end{theorem}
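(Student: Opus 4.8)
The plan is to prove the ``only if'' part by the usual divisibility computation and the ``if'' part by a short case analysis, using transposition to halve the work: since transposing an $MR(M,N;R,S)$ yields an $MR(N,M;S,R)$ and since $a+b=5$ forces $\{a,b\}$ to be $\{1,4\}$ or $\{2,3\}$, it suffices to build $MR(m,4m;4s,s)$ and $MR(2m,3m;3s,2s)$. For necessity I would note that $as\le am$ and $bs\le bm$ give $s\le m$, and that the entries $0,1,\dots,abms-1$ make the row sum $(abms-1)bs/2$ and the column sum $(abms-1)as/2$ integers; as $ab\in\{4,6\}$ the integer $abms-1$ is odd, and exactly one of $a,b$ is odd, so the sum governed by that odd one is an odd multiple of $s/2$ and $s$ must be even (hence $s\ge 2$).

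For $MR(m,4m;4s,s)$, which is $MR(m,km;ks,s)$ with $k=4$: since $4m$ is even and $s$ is even, Theorem~\ref{Th.m.km.ks.s} applies directly. For $MR(2m,3m;3s,2s)$ with $s$ even and $s\ge 3$ (so in fact $s\ge 4$ and $m\ge 4$), I would fold a known object in half. Corollary~\ref{Cor.m.km.ks.s} with $t=6$ gives (since $6m$ is even, $s$ even, $3\le s\le m$) an $NMSS(m,s;6)$, i.e.\ six $m\times m$ squares $T_0,\dots,T_5$, each with $s$ filled cells in every row and column, with all their row and column sums equal to a single constant $C$, collectively using $0,\dots,6ms-1$ once. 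Then the $2m\times 3m$ array whose top $m$ rows are $[T_0\mid T_1\mid T_2]$ and whose bottom $m$ rows are $[T_3\mid T_4\mid T_5]$ has every row summing to $3C$, every column (meeting one top and one bottom square) summing to $2C$, has $3s=bs$ filled cells per row and $2s=as$ per column, and uses $0,\dots,6ms-1$ once; so it is the desired $MR(2m,3m;3s,2s)$, and transposition then gives $MR(3m,2m;2s,3s)$.

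The remaining case is $MR(2m,3m;3s,2s)$ with $s=2$. If $m=2$ then $3s=3m$ and $2s=2m$, so there are no empty cells and the array is an ordinary $MR(4,6)$, provided by Theorem~\ref{TH:sun}. For $m\ge 3$ I would use a direct construction with the same $2\times 3$ arrangement of $m\times m$ blocks $B_0,\dots,B_5$: fill two broken diagonals of each $B_t$ with two runs of $m$ consecutive integers, distributing the twelve runs $\{hm,\dots,hm+m-1\}$ ($0\le h\le 11$) so that $B_t$ receives runs $h$ and $11-h$ (so its two starting values sum to $11m$); orienting one diagonal of $B_t$ increasingly and the other decreasingly with a cyclic offset difference $\Delta_t$ makes every column sum of $B_t$ equal and makes exactly $\Delta_t$ of its rows ``light'' (their sum is $m$ below the others), those light rows forming a cyclic interval whose position can be prescribed. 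Choosing $(\Delta_0,\Delta_1,\Delta_2)=(1,1,m-2)$ with the light intervals of $B_0,B_1,B_2$ partitioning $\{0,\dots,m-1\}$, and likewise for $B_3,B_4,B_5$, cancels all light-row defects band by band, so all top rows, all bottom rows, and all columns become constant; a short check identifies these constants with the required $3(12m-1)$ and $2(12m-1)$.

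I expect the $s=2$ subcase of $(a,b)=(2,3)$ to be the real obstacle: there is no $MS(m;2)$ to plug into Theorem~\ref{rectanglesmultiplication}, and one cannot even make the individual $m\times m$ blocks into magic squares (an $MR(m,m;2,2)$ does not exist), so the row sums must be equalized across the six blocks by balancing their ``defects'' against one another --- and precisely this balancing fails when $m=2$ (three cyclic intervals of length at most $m-1=1$ cannot cover a $2$-element set uniformly), which is exactly why the tiny case $m=2$ must be split off and supplied by Theorem~\ref{TH:sun}. Everything else reduces cleanly to Theorems~\ref{TH:sun} and~\ref{Th.m.km.ks.s} and Corollary~\ref{Cor.m.km.ks.s}.
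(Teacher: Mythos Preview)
Your argument is correct, but for $(a,b)=(2,3)$ it follows a genuinely different route from the paper's.

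For $(a,b)=(1,4)$ and for necessity you do exactly what the paper does. For $(a,b)=(2,3)$ the paper gives a single uniform construction covering every even $s\ge 2$: it takes an $MS(2m;2s)$ (call it $S_1$), uses Remark~\ref{remark} to locate $s/2$ specific diagonals of $S_1$ whose entries are the consecutive block $\{0,\dots,ms-1\}$, adds $4ms$ to those diagonals to obtain a $2m\times 2m$ array $A$, and similarly shifts half the diagonals of an $MR(m,2m;2s,s)$ by $4ms$ to obtain $B$; then $C=[\,A\mid B^{t}\,]$ is the desired $MR(2m,3m;3s,2s)$. The point of the $4ms$-shifts is precisely to relabel so that $A$ uses $\{0,\dots,4ms-1\}$ and $B$ uses $\{4ms,\dots,6ms-1\}$ while keeping all row and column sums constant.

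Your approach instead tiles the $2m\times 3m$ array by six $m\times m$ squares. For $s\ge 4$ you invoke Corollary~\ref{Cor.m.km.ks.s} with $t=6$ to get an $NMSS(m,s;6)$ and stack its six squares in a $2\times 3$ grid; this is clean and arguably more conceptual than the paper's gluing. The price is that the $NMSS$ route is unavailable when $s=2$ (there is no $NMSS(m,2;6)$), so you supply a separate direct $s=2$ construction based on balancing ``light'' rows across the three blocks of each band, together with the exceptional $MR(4,6)$ for $m=2$. That direct piece checks out (with $\Delta_0+\Delta_1+\Delta_2=m$ each band row is light in exactly one block and the row sum is $36m-3$; the column sum is $2(12m-1)$), and your explanation of why $m=2$ escapes it is exactly right. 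In short: the paper trades a dependence on Remark~\ref{remark} for a single construction valid for all even $s$, while your proof trades that structural remark for Corollary~\ref{Cor.m.km.ks.s} at the cost of a bespoke $s=2$ case.
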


\begin{proof}
If $a=1$ and $b=4$, then we are dealing with an $MR(m,4m;4s,s)$. By
Theorem \ref{Th.m.km.ks.s}, there is  an $MR(m,4m;4s,s)$ if and only if $s$ is even.

Now let $a=2$ and $b=3$. If there exists an $MR(2m,3m;3s,2s)$, then the row sum $(6ms-1)6ms/4m$ must be an integer, hence $s$ must be even.

Assume $s$ is even.
By Remark \ref{remark}, there is an $MS(2m;2s)$, say $S_1$,
with $s/2$ consecutive diagonals $d_0,d_1,\ldots, d_{(s-2)/2}$, which partition the set $\{0,1,2,\ldots, ms-1\}$  and each of these diagonals consists of $2m$ consecutive numbers.
Recall that the numbers in an $MS(2m;2s)$ are  $\{0,1,2,\ldots,4ms-1\}$.

If $s\geq 4$, we consider an $MR(m,2m; 2s,s)$, say $R$, which is constructed in the proof of Theorem \ref {Th.m.km.ks.s} using an  $MS(m;s)$, say $S_2$. If $s=2$, we consider an $MR(m,2m; 4,2)$ given in the proof of Theorem \ref{Th.m.km.2k.2}. Note that there are two diagonals in this $MR(m,2m; 4,2)$ which partition the set
$\{0,1,\ldots,2m-1\}$.

By definition, every number in
 $\{0,1,2,\ldots, ms-1\}$  appears exactly once in $S_2$ and every number in $\{0,1,2,\ldots, 2ms-1\}$ appears exactly once in $R$.
In addition, every diagonal in $S_2$ appears as a diagonal in $R$. See Figure \ref{6.4.3.6.4.2} for illustration.

Let $A$ be a $2m\times 2m$ square obtained from $S_1$ by adding $4ms$ to the diagonals $d_0,d_1,\ldots, d_{(s-2)/2}$ of $S_1$. Let $B$ be an $m\times 2m$ rectangle obtained from $R$ by adding $4ms$ to the diagonals of $R$ which consist of numbers in $\{ms,ms+1,ms+2,\ldots,2ms-1\}$.
Finally, let $C$ be a $2m\times 3m$ empty rectangle. We fill some of the cells of $C$ as follows:

\begin{enumerate}
\item For $0\leq i,j \leq 2m-1$, if $(i,j; k)\in A$, then
$(i,j; k)\in C$.

\item For $0\leq i\leq m-1$ and $0\leq j\leq 2m-1$, if $(i,j; k)\in B$, then $(j,i+2m; k)\in C$.
\end{enumerate}
It is easy to see that every number in the set $\{0, 1, 2,\ldots, 6ms-1\}$ appears exactly once in $C$.

The sum of each row of $C$ is:

$$\dfrac{4ms(4ms-1)}{4m}+ \dfrac{4ms^2}{2}+ \dfrac{2ms(2ms-1)}{4m}+ \dfrac{4ms^2}{2}=\dfrac{3s(6ms-1)}{2},$$
as desired.

The some of each column of $C$ for column $0\leq i\leq 2m-1$ is

$$\dfrac{4ms(4ms-1)}{4m}+ \dfrac{4ms^2}{2}=6ms^2-s,$$

and the column sum of $C$ for column $2m\leq i\leq 3m-1$ is
$$\dfrac{2ms(2ms-1)}{2m}+ 4ms^2=6ms^2-s,$$
as desired. See Figure \ref{6.9.6.4} for illustration.
\end{proof}

\begin{figure}[ht]
$$\begin{array}{cccc}
 S_1&&&R \\
\overbrace{
\begin{array}{|c|c|c|c|c|c|} \hline
&0&23&15&8&	\\ \hline
&&1&22&14&9   \\ \hline
10&&&2&21&13   \\ \hline
12&11&&&3&20 \\ \hline	
19&17&6	&&&4  \\ \hline
5&18&16&7&&  \\ \hline
\end{array}
}
&
\begin{array}{|c|c|c|} \hline
0&11 &	\\ \hline
&1&10   \\ \hline
9&&2   \\ \hline
5&6& \\ \hline	
&4&7  \\ \hline
8&&3  \\ \hline
\end{array}
&&
\overbrace{
\begin{array}{|c|c|c|c|c|c|} \hline
0&&9&5&&8  \\ \hline
11&1&&6&4& \\ \hline
&10&2&&7&3 \\ \hline
\end{array}
}\\
MS(6;4)&&&MR(3,6;4,2)
\end{array}
$$
\caption{$MS(6;4)$ and $MR(3,6;4,2)$}
 \label{6.4.3.6.4.2}
\end{figure}

\begin{figure}[ht]
$$\begin{array}{ccc}
\begin{array}{|c|c|c|c|c|c||c|c|c|} \hline
&24&23&15&8&&0&35 &	\\ \hline
&&25&22&14&9&&1&34   \\ \hline
10&&&26&21&13&33&&2   \\ \hline
12&11&&&27&20&5&30& \\ \hline	
19&17&6	&&&28&&4&31  \\ \hline
29&18&16&7&&&32&&3  \\ \hline
\end{array}\vspace{3mm}\\
 A  \hspace{25mm} B^t
\end{array}$$
\caption{An $MR(6,9;6,4)$}
 \label{6.9.6.4}
\end{figure}

If there exists an $MR(m,n;r,s)$ with $\gcd(r,s)=1$, then $m=as$ and $n=ar$ for some positive integer $a$.

\begin{theorem}\label{am.bm.b.a}
Let $a, b, c$ be positive integers with $2 \leq a \leq b$. Let
$a, b, c$ be all odd, or $a$ and $b$ are
both even, $c$ is arbitrary, and $(a, b)\neq (2, 2)$.
Then there exists a magic rectangle set $MR(ac,bc;b,a)$.
\end{theorem}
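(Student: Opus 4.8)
The plan is to reduce the existence of $MR(ac,bc;b,a)$ to the existence of a magic rectangle set, which is exactly what Theorem~\ref{magicrectangleset} provides under the stated hypotheses. The key observation is that a magic rectangle set $MRS(a,b;c)$ is a collection of $c$ separate $a\times b$ rectangles sharing the numbers $0,1,\ldots,abc-1$, whereas an $MR(ac,bc;b,a)$ is a single $ac\times bc$ array with exactly $b$ filled cells per row and $a$ filled cells per column. So the first step is to embed the $c$ rectangles of an $MRS(a,b;c)$ as $c$ diagonal blocks of size $a\times b$ inside the $ac\times bc$ frame, leaving all off-diagonal blocks empty.

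First I would invoke Theorem~\ref{magicrectangleset} to obtain an $MRS(a,b;c)$, say with constituent rectangles $R_0,R_1,\ldots,R_{c-1}$, common row sum $M$ and common column sum $N$. Next, partition an empty $ac\times bc$ rectangle $C$ into $c^2$ blocks $S_{(i,j)}$ of size $a\times b$, $0\le i,j\le c-1$, and for each $\ell$ place $R_\ell$ into the block $S_{(\ell,\ell)}$ (i.e. if $(p,q;k)\in R_\ell$ then $(p+\ell a,\, q+\ell b;\, k)\in C$), leaving every block $S_{(i,j)}$ with $i\ne j$ empty. Then each row of $C$ meets exactly one non-empty block, hence has exactly $b$ filled cells, and its sum is $M$; similarly each column has exactly $a$ filled cells and sum $N$. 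Since the $R_\ell$ collectively use each of $0,\ldots,abc-1$ exactly once, so does $C$. Finally check that $M$ and $N$ are the correct constants: the total of all entries is $\binom{abc}{2}=\frac{abc(abc-1)}{2}$, distributed over $ac$ rows, so $M=\frac{b(abc-1)}{2}$; likewise $N=\frac{a(abc-1)}{2}$. These agree with the row/column sums forced on any $MR(ac,bc;b,a)$, so $C$ is the desired magic rectangle.

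I do not expect a serious obstacle here; the main thing to verify carefully is that the constants $M,N$ coming out of the $MRS$ match the constants demanded by the definition of $MR(ac,bc;b,a)$, which is an easy counting identity. One small point worth stating explicitly is why $b$ filled cells per row and $a$ per column are exactly right: each of the $ac$ rows of $C$ lies in exactly one block-row $S_{(\ell,\star)}$ and within that block-row only $S_{(\ell,\ell)}$ is filled, contributing $b$ entries; the column count is symmetric. With $\gcd$ considerations aside (the problem does not require $\gcd(a,b)=1$), this completes the argument, and the hypotheses on parity and $(a,b)\ne(2,2)$ enter only through the application of Theorem~\ref{magicrectangleset}.
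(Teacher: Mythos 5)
Your proposal is correct and follows essentially the same route as the paper: both obtain an $MRS(a,b;c)$ from Theorem~\ref{magicrectangleset} and place its $c$ rectangles into the diagonal blocks of a partition of the empty $ac\times bc$ rectangle into $c^2$ blocks of size $a\times b$. Your additional verification that the row and column constants match those forced on an $MR(ac,bc;b,a)$ is a welcome explicit check that the paper leaves implicit.
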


\begin{proof}
By assumption and Theorem \ref {magicrectangleset} there is an $MRS(a,b;c)$, say $S_{k}$, where
$0\leq k\leq c-1$.
Partition an empty $ac\times bc$ rectangle, say $M$, into $c^2$ empty rectangles of size $a\times b$, say $T_{ij}$,
$0\leq i,j\leq c-1$. Replace the empty rectangle $T_{kk}$ with the rectangle $S_k$ for $0\leq k\leq c-1$. The resulting rectangle is an an $MR(ac,bc;b,a)$.
\end{proof}

\noindent{\bf Acknowledgement.}  The authors would like to thank the referee for his/her useful suggestions and comments.

\end{document}